\documentclass[11pt]{article}

\usepackage{amsthm}
\usepackage{amssymb}
\usepackage{amsmath,enumerate}
\usepackage{comment}
\usepackage{thm-restate}
\usepackage{url} 
\usepackage{hyperref}
\usepackage{graphicx}
\usepackage{subfigure}
\usepackage[noabbrev,capitalise]{cleveref}
\usepackage[affil-it]{authblk}
\usepackage{color}
\usepackage[normalem]{ulem}

\usepackage[margin=1in]{geometry}

\theoremstyle{plain}
\newtheorem{thm}{Theorem}[section]
\newtheorem{lem}[thm]{Lemma}

\newtheorem{conj}[thm]{Conjecture}

{\noindent \emph{Proof.} {}{#1}{}}{\hfill
	$\Diamond$\vspace{1em}}

\theoremstyle{plain} 
\newcommand{\thistheoremname}{}
\newtheorem{genericthm}[section]{\thistheoremname}

\theoremstyle{definition}

\def\es{\emptyset}
\def\less{\setminus}

\newcommand{\N}{N}

\title{The Erd\H{o}s-Lov\'asz Tihany Conjecture holds for all even-hole-free graphs}
\author{Zi-Xia Song\thanks{Supported by  NSF award DMS-2153945. E-mail address: {\tt Zixia.Song@ucf.edu}.}}
  \affil{ 
  { \small {Department  of Mathematics, University of Central Florida, Orlando, FL 32816, USA}}  
     }

\date{}
\begin{document}
\maketitle
\begin{abstract}
  Let  $s, t\ge2 $ be  integers.  A graph $G$ is \emph{$(s,t)$-splittable} if $V(G)$ can be partitioned into two sets $S$ and $T$ such that $\chi(G[S ]) \ge s$ and $\chi(G[T ]) \ge t$.   The Erd\H{o}s-Lov\'asz Tihany Conjecture from 1968 asserts  that every graph $G$ satisfying  $\omega(G)<\chi(G)=s+t-1$  is $(s,t)$-splittable. A vertex of a graph is \emph{bisimplicial} if the set of its neighbors can be expressed as the union of two cliques.  
Let  $G$ be a graph with   $\omega(G)<\chi(G)=s+t-1$. We prove that if $G$ does not contain $C_4$ as an induced subgraph and every induced subgraph of $G$ has a bisimplicial vertex, then $G$ is $(s,t)$-splittable. Combining our result with a    recent  result of    Chudnovsky and Seymour,  which states that every non-empty even-hole-free graph has a bisimplicial vertex,  we obtain that  the Erd\H{o}s-Lov\'asz Tihany Conjecture holds for all even-hole-free graphs.  \end{abstract}

\baselineskip 18pt

\section{Introduction}
  All graphs in this paper are finite and simple. For a graph $G$, we   use $V(G)$ to denote the vertex set,     $\delta(G)$  the minimum degree,    $\omega(G)$ the clique number and $\chi(G)$ the chromatic number.         For a vertex $x\in V(G)$, we will use $N(x)$ to denote the set of vertices in $G$ which are adjacent to $x$.
We define   $d(x) = |N(x)|$.
If  $A\subseteq V(G)$,    we say that $x$ is \emph{complete} to $A$ if $x$  is adjacent to all vertices in $A$. The subgraph of $G$ induced by $A$, denoted $G[A]$, is the graph with vertex set $A$ and edge set $\{xy \in E(G) \mid x, y \in A\}$. We denote by $V(G) \less A$ the set $V(G) - A$,   and $G \less A$ the subgraph of $G$ induced on $V(G) \less A$. 
If $A = \{a\}$, we simply write   $G \less a$.  
    \medskip

   Our work is motivated by the  celebrated     Erd\H{o}s-Lov\'asz Tihany Conjecture~\cite{Erdos68}. 
Let  $s$ and $ t $ be positive integers. A graph $G$ is \emph{$(s,t)$-splittable} if $V(G)$ can be partitioned into two sets $S$ and $T$ such that $\chi(G[S ]) \ge s$ and $\chi(G[T ]) \ge t$.  In 1968,   Erd\H{o}s~\cite{Erdos68}  published the following conjecture of Lov\'asz, which has since been known as the   Erd\H{o}s-Lov\'asz Tihany Conjecture.

\begin{conj}[The Erd\H{o}s-Lov\'asz Tihany Conjecture]\label{c:ELTC}
Let $s,t\ge 2$ be integers, and let $G$ be a   graph satisfying
 \[\omega(G)<\chi(G)=s+t-1.\]   Then $G$ is $(s,t)$-splittable.  
\end{conj}

 \cref{c:ELTC} is  hard  and few related results are known. 
The case $(2, 2)$ for  \cref{c:ELTC}  is trivial; the cases $(2, 3)$ and   $(3, 3)$  were shown by Brown and Jung~\cite{BJ69} in 1969;
Mozhan~\cite{Moz87} and Stiebitz~\cite{Sti87a} each independently showed the case $(2, 4)$ in 1987; 
the cases   $(3, 4)$ and  $(3, 5)$ were  settled by Stiebitz~\cite{Sti88}  in 1988.  A relaxed version of \cref{c:ELTC} was proved in~\cite{Sti17}.\medskip

Recent work on  \cref{c:ELTC}  have also focused on proving the conjectures for certain classes of graphs. 
A vertex of a graph is  \emph{bisimplicial} if the set of its neighbors is the union of two cliques; a graph is \emph{quasi-line} if   every vertex is bisimplicial. Note that every line graph is  quasi-line and every quasi-line graph is claw-free~\cite{ChSe2012}.  A \emph{hole} in a graph   is an induced cycle of length at least four; a hole is \emph{even} if it has an even length. A graph is \emph{even-hole-free} if it contains no even hole. 
 \cref{c:ELTC}    has  been verified to be true   for line graphs by Kostochka and Stiebitz~\cite{KS08}; 
quasi-line graphs,  and   graphs $G$ with $\alpha(G) = 2$ by Balogh, Kostochka, Prince and Stiebitz~\cite{BKPS09}; graphs  $G$ with $\alpha(G)\ge3$ and    no hole of length between $4$ and $2\alpha(G)-1$ by   the present author~\cite{Song19}. \cref{c:ELTC} remains open for claw-free graphs. Several partial results have been obtained; see, for example, \cite{CFP13,LT26}. We refer the reader to a
recent survey by the present author~\cite{SongSurvey} for further background.   \medskip

  Chudnovsky and Seymour~\cite{CS23}  recently  proved  a structural result on even-hole-free graphs. 

\begin{thm}[Chudnovsky and Seymour~\cite{CS23}]\label{t:evenholefree}  Let $G$ be a    non-empty even-hole-free graph. Then 
  $G$ has a bisimplicial vertex and
  $\chi(G)\le 2\omega(G)-1$. 
\end{thm}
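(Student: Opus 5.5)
The statement has two parts. The second, $\chi(G)\le 2\omega(G)-1$, follows directly from the first, so almost all the work lies in producing a bisimplicial vertex. I would therefore prove the existence of a bisimplicial vertex first, then deduce the colouring bound by a degeneracy argument.

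\emph{The colouring bound.} Let $v$ be a bisimplicial vertex of $G$, with $N(v)=K_1\cup K_2$ where $K_1,K_2$ are cliques. Each $K_i\cup\{v\}$ is a clique, so $|K_i|\le \omega(G)-1$ and hence $d(v)\le 2\omega(G)-2$. Being even-hole-free is inherited by induced subgraphs, and $\omega$ does not increase on them. So every non-empty induced subgraph $H$ of $G$ has a vertex of degree at most $2\omega(H)-2\le 2\omega(G)-2$, i.e.\ $G$ is $(2\omega(G)-2)$-degenerate. Deleting such vertices one at a time and then colouring greedily in reverse order uses at most $2\omega(G)-1$ colours.

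\emph{Existence of a bisimplicial vertex.} I would argue by induction on $|V(G)|$, but with a strengthened hypothesis so that a vertex found in a piece of a decomposition is still bisimplicial in $G$. A natural candidate is: for every non-complete even-hole-free $G$ and every clique $K$ (or every vertex $x$), some bisimplicial vertex lies outside $N[K]$ (respectively $N[x]$). The plan then has three steps.
\begin{enumerate}
\item If $G$ has a clique cutset $C$, apply the strengthened hypothesis to each side $G[A\cup C]$ with the clique $C$. A vertex of $A$ avoiding $N[C]$ has the same neighbourhood in $G$ as in the piece, so it remains bisimplicial in $G$.
\item Otherwise, use the known decomposition theory for even-hole-free graphs (da Silva--Vu\v{s}kovi\'c and related work): a graph of this class either has a star-type or $2$-join-type cutset, or belongs to a basic class. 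The basic class consists of graphs such as those with an extended clique-tree structure, and there bisimplicial vertices far from a prescribed clique can be found by hand.
\item For star-type cutsets, I would again pick a side away from the centre. The key tool is the even-hole-freeness of paths and wheels through the cutset: it should force every vertex of the chosen side with neighbours in the cutset to see only two cliques there.
\end{enumerate}

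The main obstacle is the star-cutset case of the induction. For clique cutsets, neighbourhoods are preserved, but across a star cutset a vertex bisimplicial in a piece can gain neighbours on the other side. Controlling this needs a careful choice of the ``deepest'' piece, for instance a minimal cutset or a vertex at maximum distance from a fixed vertex. I would first try choosing $v$ to minimise something like $|N[v]|$ among vertices in a minimal atom. I would then show that the non-neighbourhood structure, together with the exclusion of $C_4$ and of even wheels, forces $N(v)$ to be covered by two cliques. I expect this is exactly where Chudnovsky and Seymour's argument is substantial, and in the present paper I would simply invoke their result for this part.
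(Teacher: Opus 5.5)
Your proposal agrees with the paper, which gives no proof of this theorem and simply quotes it from Chudnovsky and Seymour; like the paper, you rest the existence of a bisimplicial vertex on that citation, and your degeneracy deduction of $\chi(G)\le 2\omega(G)-1$ is correct and is exactly how the bound follows. A bisimplicial vertex of an induced subgraph $H$ has degree at most $2\omega(H)-2$. Do not present the induction sketch as an argument, though: the strengthened hypothesis as you state it is false whenever a non-complete graph has a dominating clique (e.g.\ $P_3$ with $K$ its middle vertex, where $N[K]=V(G)$), so it would need to be restricted to cliques with $N[K]\neq V(G)$, and your steps 2--3 describe hopes rather than proofs.
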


It remains open whether \cref{c:ELTC} holds for even-hole-free graphs. Using \cref{t:evenholefree}, the present author~\cite{Song22} obtained the following partial result.
  
 \begin{thm}[Song~\cite{Song22}]\label{ehfree}
Let $G$ be an  even-hole-free graph  with $\omega(G)<\chi(G)=s+t-1$, where $t\ge s\ge2$. If $s>\chi(G)/3$, then $G$  is $(s,t)$-splittable. 
\end{thm}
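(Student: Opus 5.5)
We would argue by contradiction, taking a counterexample $G$ with $|V(G)|$ minimum. \textbf{Step 1: criticality.} We first reduce to the case where $G$ is $(s+t-1)$-vertex-critical. Let $H\subseteq G$ be a vertex-critical induced subgraph with $\chi(H)=\chi(G)$. Then $H$ is still even-hole-free and $\omega(H)\le\omega(G)<\chi(H)$. A splitting $(S,T)$ of $H$ extends to $G$ by adding $V(G)\less V(H)$ to $T$. So we may assume $\chi(G\less v)=s+t-2$ for every vertex $v$, and hence $\delta(G)\ge s+t-2$. By \cref{t:evenholefree} we also have $\omega(G)\ge(\chi(G)+1)/2=(s+t)/2$. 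The hypothesis $s>\chi(G)/3$ rewrites as $t\le 2s$. Thus $s$ and $t$ are comparable, and a clique of size about $(s+t)/2$ is large relative to both.

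\textbf{Step 2: structure at a bisimplicial vertex.} By \cref{t:evenholefree}, $G$ has a bisimplicial vertex $x$, with $N(x)=A\cup B$ where $A$ and $B$ are cliques. Since $\{x\}\cup A$ and $\{x\}\cup B$ are cliques, $|A|,|B|\le\omega(G)-1\le s+t-3$. Together with $|A|+|B|\ge d(x)\ge s+t-2$, this gives $|A|,|B|\ge 1$, and at least one of them, say $A$, satisfies $|A|\ge\lceil (s+t-2)/2\rceil\ge s-1$ because $t\ge s$. Hence $\{x\}\cup A$ contains a clique $K$ of size $s$ containing $x$. Since $\chi(G[K])=s$, and since $G$ is a counterexample, $\chi(G\less K)\le t-1$. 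Criticality gives equality, so $\chi(G\less K)=t-1$.

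\textbf{Step 3: the recolouring argument (main obstacle).} Fix a $(t-1)$-colouring $c$ of $G\less K$. Since $\chi(G)=s+t-1$, $c$ cannot be extended to $G$ using fewer than $s$ new colours. Consequently, for every vertex $y\in K$ and every colour $\alpha$, $y$ has a neighbour of colour $\alpha$ outside $K$; otherwise $y$ could reuse $\alpha$ and $K$ would need only $s-1$ new colours. Now $x\in K$ has only $|A|+|B|-(s-1)$ neighbours outside $K$, namely the vertices of $(A\cup B)\less K$. This set splits into two cliques, so it receives at most $\omega(G)-1$ colours from each clique.

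We exploit the freedom to choose $K$. We vary which $s-1$ vertices of $A$ go into $K$, or use $K\subseteq\{x\}\cup B$ when $|B|\ge s-1$, and each choice must satisfy the same saturation property. Comparing two choices $K$ and $K'$ that differ in one vertex, we would build Kempe chains between a colour class of $c$ and a vertex swapped between $K$ and $K'$. Such a chain either recolours to save a colour, a contradiction, or produces a path between two nonadjacent vertices of $A\cup B$ through colour classes, closing a hole through $x$. The $C_4$-freeness (even-hole-freeness) restricts how vertices of $A\less B$ and $B\less A$ can share neighbours. Used repeatedly, it should force such a hole to be even, or force a vertex outside $N(x)$ to be complete to a large clique, contradicting $\omega(G)<\chi(G)$. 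I expect this parity and Kempe-chain bookkeeping to be the main difficulty. The inequality $t\le 2s$ should enter exactly here: it guarantees that $\omega(G)-1\ge s-1$ and $|A\cup B|-(s-1)$ are not large enough for $(A\cup B)\less K$ to carry all $t-1$ colours from two cliques without two of those colours repeating in a way that Kempe swaps can exploit.

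\textbf{Step 4: the fallback if Step 3 stalls.} If the direct count fails, we would instead look for a set $S\supseteq K$ with $\chi(G[S])\ge s$ that is not a clique. A natural choice is $S=\{x\}\cup A$ together with one vertex of $B$, chosen so that $G\less S$ still has chromatic number $t$. We would then use minimality of $G$ on $G\less x$, which is a smaller even-hole-free graph with $\chi=s+t-2$, applied to the pair $(s-1,t)$ or $(s,t-1)$. This requires checking that $s-1>(s+t-2)/3$, or the analogous inequality for $(s,t-1)$, still holds; the split obtained for $G\less x$ is then repaired by adding $x$ to the side it increases.
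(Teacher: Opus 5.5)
\textbf{There is a genuine gap: Step 3, which carries the whole proof, is not an argument, and Step 4 does not work.} Your Steps 1--2 are sound and match the paper's setup. These are criticality, $\delta(G)\ge s+t-2$, a bisimplicial $x$ with $N(x)=A\cup B$, $|A|\ge s-1$, an $s$-clique $K\ni x$ inside $\{x\}\cup A$, and the need to show $\chi(G\less K)\ge t$.

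The saturation property you derive in Step 3 is that every $y\in K$ has a neighbour of every colour. This is too weak to give a contradiction. For $y=x$ it only says that $(A\cup B)\less K$, a set of at least $t-1$ vertices, uses all $t-1$ colours, which is perfectly possible. The Kempe-chain and hole-parity discussion never names a concrete contradiction, and nothing in it explains where $t\le 2s$ would be used.

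Your fallback fails for three reasons:
\begin{itemize}
\item $G\less x$ may satisfy $\omega(G\less x)=s+t-2=\chi(G\less x)$, so no minimality hypothesis applies to it.
\item For the pair $(s-1,t)$, the condition $s-1>(s+t-2)/3$ fails when $t\in\{2s-1,2s\}$.
\item After splitting $G\less x$, putting $x$ back on one side need not raise that side's chromatic number, so there is nothing to ``repair'' with.
\end{itemize}

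The missing idea has two parts. Neither needs Kempe chains, parity, $\chi\le 2\omega-1$, or $t\le 2s$.

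First, strengthen the saturation. Let $c$ be a $(t-1)$-colouring of $G\less K$ with $\chi(G)=|K|+t-1$. Then every colour class $V_i$ contains a vertex \emph{complete} to $K$. Otherwise each $u\in V_i$ misses some $k_u\in K$. Since $V_i$ is independent, giving $u$ the colour of $k_u$ colours $G[K\cup V_i]$ with $|K|$ colours, which saves a colour. This yields $t-1$ distinct vertices of $N_c(K)$. If $N_c(K)$ is a clique, they extend $K$ to a clique of size $s+t-1=\chi(G)$, a contradiction (\cref{l:Tihanyclique}).

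Second, choose $K$ so that $N_c(K)$ is a clique. Since $G$ is $C_4$-free, the sets $N(a)\cap B$ for $a\in A$ are totally ordered by inclusion. Indeed, suppose $b\in B$ is adjacent to $a$ but not $a'$, and $b'\in B$ is adjacent to $a'$ but not $a$; then $a,a',b',b$ induce a $C_4$. Take $K=\{x,a_1,\dots,a_{s-1}\}$, where $a_1$ has the smallest $B$-neighbourhood. Then $N_c(K)=(A\less K)\cup(N(a_1)\cap B)$. This is a clique, because every $a\in A\less K$ is complete to $N(a_1)\cap B$ (\cref{l:Tihany}).

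This is exactly how the paper proceeds. It proves the statement for all $t\ge s\ge 2$ without the hypothesis $s>\chi(G)/3$; the statement is a special case of \cref{t:ehfree}.
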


 Here we prove that  \cref{c:ELTC}   holds for all even-hole-free graphs.   A graph is \emph{$C_4$-free} if it does not contain $C_4$ as an induced subgraph. We prove a stronger result. 
 
  \begin{thm}\label{t:main}
Let $s,t\ge 2$ be integers, and let $G$ be a $C_4$-free graph such that every induced subgraph of $G$ has a bisimplicial vertex. If 
  $\omega(G)<\chi(G)=s+t-1$,
then $G$ is $(s,t)$-splittable.   
\end{thm}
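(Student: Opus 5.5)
We argue by contradiction via a minimal counterexample. Let $G$ be a counterexample with $|V(G)|$ minimum; among the pairs $(s,t)$ witnessing this, fix one. Standard reductions come first. If $G$ has a vertex $v$ with $\chi(G\less v)=\chi(G)$, then $G\less v$ inherits the hypotheses, since induced subgraphs of $C_4$-free graphs with the bisimplicial property keep both properties, and $\omega(G\less v)\le\omega(G)<\chi(G\less v)$. By minimality $G\less v$ is $(s,t)$-splittable, and adding $v$ to either side keeps it so. Hence $G$ is vertex-critical, and in particular $\delta(G)\ge \chi(G)-1=s+t-2$.

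Next, it suffices to find a clique $K$ such that $\chi(G\less K)\ge t$ when $|K|=s$, or more generally any $s$-chromatic set $S$ with $\chi(G\less S)\ge t$. Since $G$ is critical, deleting a set $S$ lowers $\chi$ by at most $\chi(G[S])$. The standard tool, in the spirit of Kostochka--Stiebitz and Balogh et al.\ for quasi-line graphs, is the following. If $K$ is a clique with $|K|=s$ and $\chi(G\less K)\le t-1$, then $\chi(G)\le \chi(G\less K)+|K|$ is consistent; we need to turn this into a contradiction. So the goal is to show that some $s$-clique $K$ satisfies $\chi(G\less K)=\chi(G)-s$, i.e.\ deleting $K$ costs exactly $|K|$ colours. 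Equivalently, we look for a clique $K$ whose deletion drops the chromatic number by exactly $|K|$. By induction on $s$, it is enough to find an edge $xy$ with $\chi(G\less\{x,y\})=\chi(G)-2$, or, more robustly, a clique $K$ of size $2$ or $3$ that is ``cheap''. Then we apply minimality to $G\less K$ with parameters $(s-|K|,t)$, handling small $s$ separately via the known cases $s=2$, where $(2,t)$ follows if some edge is not critical.

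The main step uses the bisimplicial vertex. Take a bisimplicial vertex $v$ with $N(v)=A\cup B$, where $A$ and $B$ are cliques. Since $d(v)\ge s+t-2$ and $|A|,|B|\le\omega(G)-1\le s+t-3$, both $A\less B$ and $B\less A$ are nonempty. The $C_4$-freeness now enters: for $a\in A\less B$ and $b\in B\less A$ nonadjacent, any common neighbour $w\neq v$ of $a$ and $b$ must be adjacent to $v$, otherwise $v a w b$ is an induced $C_4$. Hence $N(a)\cap N(b)\subseteq N[v]$. Using this, the plan is to take a proper $(\chi-1)$-colouring $c$ of $G\less v$. In $c$ every colour appears on $N(v)$, and each colour class meets $A$ and $B$ at most once each. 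Counting gives $|A\cup B|\ge\chi-1$, so many colours appear exactly once in $N(v)$. We then recolour along Kempe chains: for two colours $\alpha,\beta$ appearing once, on $a$ and $b$, the $\alpha\beta$-Kempe chain from $a$ must reach $b$, and $C_4$-freeness forces this chain to be short or to pass through $N(v)$. From this we want to extract a clique $K\ni v$ of size $s$ (inside $\{v\}\cup A$ or $\{v\}\cup B$) together with a colouring of $G\less K$ using $\chi-s$ colours, which would contradict nothing directly. Instead we want $\chi(G\less K)\ge t$, i.e.\ $G\less K$ is not $(t-1)$-colourable. Suppose it were; then combining a $(t-1)$-colouring of $G\less K$ with $s$ new colours on $K$ and recolouring $v$ into a colour missing on $N(v)$ would give a $(\chi-1)$-colouring of $G$, which is impossible.

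The obstacle is exactly this last extension step: making some $s$ vertices of $\{v\}\cup A\cup B$ forming a clique absorb only $s-1$ colours once $v$ is free. I would try choosing $K\subseteq \{v\}\cup A$ of size $s$ to contain $v$ and the vertices of $A$ with the fewest neighbours outside $N[v]$. I would then use $C_4$-freeness (no two nonadjacent vertices of $A\less B$ and $B\less A$ have common neighbours outside $N[v]$) to show that the vertices of $B$ can be recoloured with colours freed by $K$. If this direct approach fails, the fallback is to combine the argument with \cref{ehfree}-style counting, treating $s$ small relative to $\chi$ by selecting $K$ inside $B$ instead.
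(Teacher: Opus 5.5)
There is a genuine gap: the proposal never produces the $s$-clique $K$ with $\chi(G\less K)\ge t$. Your reduction to a vertex-critical $G$ with $\delta(G)\ge s+t-2$ is fine, and you correctly note that such a clique suffices. But the step you flag as ``the obstacle'' is essentially the whole theorem, and the Kempe-chain sketch does not resolve it.

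\textbf{Recolouring only $v$ does not work.} Take a $(t-1)$-colouring of $G\less K$ and try to give $v$ a colour missing on $N(v)\less K$. This fails in general: $N(v)\less K$ has at least $(s+t-2)-(s-1)=t-1$ vertices and may meet every colour class.

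\textbf{Your second paragraph reverses the target.} An $s$-clique with $\chi(G\less K)=\chi(G)-s=t-1$, or an edge with $\chi(G\less\{x,y\})=\chi(G)-2$, is exactly the bad case to be ruled out, not something to look for. The proposed induction through $G\less K$ with parameters $(s-|K|,t)$ also fails: it does not guarantee that $K$ together with the $(s-|K|)$-side has chromatic number $s$.

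Two ideas are missing.

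\textbf{A sufficient condition for the required clique.} Let $N_c(K)$ denote the set of vertices outside $K$ complete to $K$. If $K$ is a clique and $N_c(K)$ is itself a clique, then $\chi(G\less K)\ge\chi(G)-|K|+1$. Suppose instead that $G\less K$ has a $(\chi(G)-|K|)$-colouring. Each colour class must contain a vertex complete to $K$. If some class did not, you could give each of its vertices the colour of one of its non-neighbours in $K$ and save a colour. Choosing one such vertex per class gives a clique inside $N_c(K)$. Together with $K$ it has $\chi(G)$ vertices, contradicting $\omega(G)<\chi(G)$.

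\textbf{Where $C_4$-freeness should be used.} It should be applied inside $N(v)$, not to common neighbours outside $N[v]$. Write $N(v)=A\cup B$ with $A,B$ disjoint cliques. Then the sets $N(a)\cap B$, $a\in A$, form a chain under inclusion. Otherwise there are $x,y\in A$, $z\in (N(x)\cap B)\less N(y)$ and $w\in (N(y)\cap B)\less N(x)$, and $xzwy$ is an induced $C_4$.

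\textbf{Finishing the argument.} Take $A$ to be the larger of the two cliques. Since $s\le t$, we have $d(v)\ge s+t-2\ge 2s-2$, so $|A|\ge s-1$. Let $K=\{v,a_1,\dots,a_{s-1}\}$, where $a_1$ has the smallest $B$-neighbourhood. Then $N_c(K)=(A\less K)\cup(N(a_1)\cap B)$, which is a clique by the chain property. So $K$ is the required clique.

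Your suggested selection rule, fewest neighbours outside $N[v]$, looks at the wrong set. Since $v\in K$, no vertex outside $N[v]$ can ever lie in $N_c(K)$.
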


Combining \cref{t:main} with  \cref{t:evenholefree} leads to the following. 

 \begin{thm}\label{t:ehfree}
Let $s,t\ge 2$ be integers, and let $G$ be an even-hole-free graph satisfying
\[
  \omega(G)<\chi(G)=s+t-1.
\]
Then $G$ is $(s,t)$-splittable.   
\end{thm}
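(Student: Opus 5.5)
The statement to prove is \cref{t:ehfree}, and I would derive it directly from \cref{t:main} and \cref{t:evenholefree}. That means checking that an even-hole-free graph $G$ meets both hypotheses of \cref{t:main}: it is $C_4$-free, and every induced subgraph of $G$ has a bisimplicial vertex.

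\textbf{$C_4$-freeness.} An induced $C_4$ is an induced cycle of length four. By definition it is a hole of even length, i.e. an even hole. Since $G$ is even-hole-free, $G$ contains no induced $C_4$, so $G$ is $C_4$-free.

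\textbf{Bisimplicial vertices.} Let $H$ be an induced subgraph of $G$. Any hole of $H$ is an induced cycle in $H$. Because $H$ is induced in $G$, it is also an induced cycle of $G$ of the same length. Hence $H$ has no even hole, so $H$ is even-hole-free. If $H$ is non-empty, \cref{t:evenholefree} gives $H$ a bisimplicial vertex. The empty graph has no vertices, so the hypothesis of \cref{t:main} is to be read for non-empty induced subgraphs, which is the only meaningful reading. In any case the proof of \cref{t:main} will only invoke it for non-empty subgraphs, such as $G$ itself or $G$ minus some vertices.

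\textbf{Conclusion.} With $s,t\ge2$ and $\omega(G)<\chi(G)=s+t-1$, \cref{t:main} applies and shows that $G$ is $(s,t)$-splittable. The only real obstacle is \cref{t:main} itself, whose proof comes later in the paper; this corollary involves nothing beyond the two hereditary checks above.
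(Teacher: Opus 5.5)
Your proposal is correct and follows the paper's route exactly: the paper obtains this theorem by combining Theorem~\ref{t:main} with Theorem~\ref{t:evenholefree}. Your two checks are the verifications the paper leaves implicit: an induced $C_4$ is an even hole, and even-hole-freeness is hereditary, so every non-empty induced subgraph has a bisimplicial vertex.
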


\section{Proof of Theorem~\ref{t:main}}~\label{s:Main}
The proof of \cref{t:main} relies on the following two key lemmas.  For a nonempty set $A\subseteq V(G)$, its \emph{common neighborhood} is defined to be 
\[
  \N_c(A):=\{v\in V(G)\setminus A \mid v \text{ is complete to  } A\}.
\]
Following the definition in \cite{CFP13},  we say that a  clique $K$ in $G$ is    \emph{Tihany} if
\[
  \chi(G\less K)\ge \chi(G)-|K|+1.
\]
It is worth noting that when $\chi(G)=s+t-1$, a Tihany clique of order $s$ in $G$ immediately implies that 
 $G$ is $(s,t)$-splittable.

  \begin{lem}\label{l:Tihanyclique}
Let $G$ be a graph with $\omega(G)<\chi(G)$, and let $K\ne\es$ be a clique in $G$.
If $\N_c(K)$ is a clique, then $K$ is  Tihany.
 
\end{lem}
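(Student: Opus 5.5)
The plan is a proof by contradiction with a direct recolouring. Write $k=\chi(G)$, $r=|K|$ and $C=\N_c(K)$. We suppose $K$ is not Tihany, so $H:=G\less K$ has a proper colouring $\varphi$ with $m:=k-r$ colours. From $\varphi$ we build a proper colouring of $G$ with $k-1$ colours, which is impossible.

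First I rule out the case $m\le |C|$. By hypothesis $C$ is a clique, and $K$ is complete to $C$, so $K\cup C$ is a clique. Hence $r+|C|\le \omega(G)\le k-1$. If $m\le |C|$, then colouring $K$ with $r$ fresh colours gives $\chi(G)\le m+r\le |C|+r\le k-1$, a contradiction. So $m\ge |C|+1$. The vertices of $C$ receive pairwise distinct colours under $\varphi$, so some colour class $I$ of $\varphi$ contains no vertex of $C$.

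Next I recolour around this class. Write $K=\{x_1,\dots,x_r\}$. Every vertex $w\in I$ lies outside $K\cup C$, so $w$ has a non-neighbour in $K$.
\begin{itemize}
\item If $w$ is adjacent to $x_r$, its non-neighbour can be chosen among $x_1,\dots,x_{r-1}$. Assign each $w\in I\cap N(x_r)$ to one such non-neighbour $x_i$, and let $W_i$ be the set of vertices assigned to $x_i$.
\item Keep the $m-1$ colour classes of $\varphi$ other than $I$ unchanged.
\item Replace $I$ by $(I\less N(x_r))\cup\{x_r\}$, which is independent.
\item Add the $r-1$ new classes $\{x_i\}\cup W_i$ for $1\le i\le r-1$. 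Each is independent, since $W_i\subseteq I$ and $x_i$ has no neighbour in $W_i$.
\end{itemize}
These sets partition $V(G)$ into $(m-1)+1+(r-1)=k-1$ independent sets. This contradicts $\chi(G)=k$, so $K$ is Tihany. When $r=1$ the same argument works: then $N(x_1)=C$, so $I\cap N(x_1)=\es$ and no new classes are needed.

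The main obstacle I expect is finding the right class to merge with one vertex of $K$. A naive extension of $\varphi$ spends $r$ new colours, one too many. The point of the argument is that the clique bound $r+|C|\le\omega(G)<k$ forces a colour class $I$ avoiding $C$. Once $I$ avoids $C$, every vertex of $I$ that blocks $x_r$ can be moved into the new class of one of the other vertices of $K$. Checking that the rearranged classes are independent is routine.
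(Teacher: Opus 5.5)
Your proof is correct and is essentially the paper's argument. The paper takes a $(\chi(G)-|K|)$-colouring of $G\less K$. A colour class containing no vertex of $\N_c(K)$ can be absorbed into $|K|$ colours together with $K$, and otherwise one vertex of $\N_c(K)$ from each class gives a clique of order $\chi(G)$. You run the same pigeonhole in contrapositive form via $|K|+|\N_c(K)|\le\omega(G)<\chi(G)$ and write the $|K|$-colouring of $G[K\cup I]$ out explicitly.

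Your preliminary case $m\le|C|$ is redundant, since $r+|C|\le k-1$ already gives $m=k-r\ge|C|+1$ directly.
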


\begin{proof}
Let   $r:=\chi(G)-|K|$. Since $K$ is a clique and
$\omega(G)<\chi(G)$, we see that  $r\ge 1$ and $\chi(G)\le |K| +\chi(G\less K)$.  It follows that  $\chi(G\less K)\ge r$. Suppose $K$ is not Tihany. Then $\chi(G\less K)\le r $.  Thus $\chi(G\less K)=r$ and   $\chi(G)= |K| +r$.   Let $V_1,\ldots,V_r$ be the color classes of a proper
$r$-coloring of $G\less K$. We claim that for each $i\in\{1,\ldots, r\}$, some vertex in  $V_i$  is complete to $K$. Suppose not. We may assume that no vertex in 
  $V_1$ is complete to $K$. Then $\chi(G[K\cup V_1])=|K|$ because 
  every vertex in $  V_1$ is not adjacent to some vertex   in $K$ and so any proper coloring of $G[K]$ can be extended to $G[K\cup V_1]$.  Thus  $\chi(G)\le  |K|+(r-1)$, a contradiction. \medskip

For each $i\in \{1,\ldots, r\}$, let 
$v_i\in V_i\cap \N_c(K)$. Then vertices $v_1,\ldots,v_r$ are pairwise distinct.  Since $\N_c(K)$ is a clique, we see that $\{v_1,\ldots,v_r\}$ is a clique.  But then $K\cup\{v_1,\ldots,v_r\}$ is a clique of order $|K|+r=\chi(G)$,   which contradicts to the assumption that 
$\omega(G)<\chi(G)$.  This proves that  $\chi(G\less K)\ge r+1$ and so $K$ is Tihany, as desired.
\end{proof}

Note that if a vertex $v$ in a graph $G$ is bisimplicial, then $N(v)$ can be expressed as the union of two disjoint cliques.  

\begin{lem}\label{l:Tihany}
Let $G$ be a $C_4$-free graph, let $v\in V(G)$ such that $N(v)$ is the union of two disjoint cliques, say $A$ and $B$. 
 If $1\le r\le |A|$, then there is a clique 
$K\subseteq A\cup\{v\}$ with $|K|=r+1$ and $v\in K$ such that   $K$ is Tihany\end{lem}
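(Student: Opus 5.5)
The plan is to find the desired clique in the form $K=\{v\}\cup A'$ with $A'\subseteq A$ and $|A'|=r$, and to choose $A'$ so that $\N_c(K)$ is a clique. Then \cref{l:Tihanyclique} gives that $K$ is Tihany. That lemma requires $\omega(G)<\chi(G)$, which is the standing hypothesis in the setting where \cref{l:Tihany} will be applied. Without it the statement can fail: for instance, if $G$ is complete, no clique is Tihany. So I would make this assumption explicit in the statement.

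\emph{Step 1: the neighbourhoods of $A$ in $B$ form a chain.} For $a\in A$ write $N_B(a):=N(a)\cap B$. Suppose $a_1,a_2\in A$ and $b_1,b_2\in B$ satisfy $a_1b_1,a_2b_2\in E(G)$ and $a_1b_2,a_2b_1\notin E(G)$. Since $A$ and $B$ are cliques, $a_1a_2$ and $b_1b_2$ are edges. Then $a_1a_2b_2b_1$ is an induced $C_4$ in $G$, which is a contradiction. Hence for any $a_1,a_2\in A$, the sets $N_B(a_1)$ and $N_B(a_2)$ are comparable under inclusion. So we can order $A=\{a_1,\dots,a_m\}$ with
\[N_B(a_1)\subseteq N_B(a_2)\subseteq\cdots\subseteq N_B(a_m).\]

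\emph{Step 2: choice of $K$.} Let $A':=\{a_1,\dots,a_r\}$; this is possible since $r\le m=|A|$. Set $K:=\{v\}\cup A'$. This is a clique of order $r+1$ containing $v$, because $A\subseteq N(v)$ and $A$ is a clique.

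\emph{Step 3: $\N_c(K)$ is a clique.} Every vertex of $\N_c(K)$ is adjacent to $v$, so it lies in $(A\setminus A')\cup B$. A vertex of $B$ lies in $\N_c(K)$ only if it belongs to $\bigcap_{i\le r}N_B(a_i)=N_B(a_1)$. Hence
\[\N_c(K)\subseteq (A\setminus A')\cup N_B(a_1).\]
Both $A\setminus A'$ and $N_B(a_1)$ are cliques. By the chain property, $N_B(a_1)\subseteq N_B(a_j)$ for every $j>r$, so each vertex of $A\setminus A'$ is complete to $N_B(a_1)$. Therefore the right-hand side is a clique, and so is its subset $\N_c(K)$. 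Applying \cref{l:Tihanyclique} (with $K\neq\es$) then shows that $K$ is Tihany.

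The main step is Step 1: the $C_4$-freeness forces the nested structure, and this is exactly what lets us choose $A'$ among the vertices with the smallest $B$-neighbourhoods. The remaining steps are direct checks.
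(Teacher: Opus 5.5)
Your proof is correct and follows the paper's argument essentially step for step: the $C_4$-free chain $N_B(a_1)\subseteq\cdots\subseteq N_B(a_q)$, the choice $K=\{v,a_1,\dots,a_r\}$, and the check that $\N_c(K)\subseteq (A\setminus K)\cup N_B(a_1)$ is a clique before invoking \cref{l:Tihanyclique}. Your remark about the missing hypothesis is also right: the statement as written omits $\omega(G)<\chi(G)$ and fails without it (a complete graph is a counterexample), whereas the paper's proof uses the hypothesis implicitly through \cref{l:Tihanyclique}, and it does hold where the lemma is applied.
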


\begin{proof}
For each $a\in A$, let  $\N_B(a):=N(a)\cap B$. Let $q:=|A|$.  We first show that the vertices in $A$ can be enumerated as $a_1, \ldots, a_q$ such that 
\[\N_B(a_1)\subseteq \N_B(a_2)\subseteq\cdots\subseteq \N_B(a_q).\]
 Suppose not. Then there exist   vertices  $x,y\in A$ and vertices $z\in \N_B(x)\less \N_B(y)$ and $w\in \N_B(y)\less \N_B(x)$. 
 Since $A$ and $B$ are cliques, we see that   $G[\{x,y,w,z\}]=C_4$, a contradiction. 
 Recall that $1\le r\le q$. Let $K:=\{v, a_1,\ldots,a_r\}$. Then  $K$ is a clique. We next show that $K$ is Tihany. By \cref{l:Tihanyclique}, it suffices to show that $\N_G(K)$ is a clique. \medskip

For each vertex $u\in \N_c(K)$, we see that $uv\in E(G)$ and so $u\in A\cup B$.    Moreover,
\[
  \N_c(K)
  = (A\setminus K)\cup\bigcap_{i=1}^{r}\N_B(a_i)
  = (A\setminus K)\cup \N_B(a_1).
\]
Both $A\setminus K=\{a_{r+1}, \ldots, a_q\}$ and $\N_B(a_1)\subseteq B$   are cliques. If $j>r$, then
$\N_B(a_1)\subseteq \N_B(a_j)$, and so $a_j$ is adjacent to every vertex in  $\N_B(a_1)$.
It follows that  $\N_c(K)$ is a clique, as desired.
\end{proof}

  We are now ready to prove \cref{t:main}.

\begin{proof}[Proof of Theorem~\ref{t:main}]
 We may assume that  $s\le t$. Let $k: =\chi(G)=s+t-1$.
 We may further assume that $G$ is $k$-critical, that is,  for every $v\in V(G)$, $\chi(G\less v)=k-1$. This is possible by choosing  an induced subgraph $H$ of $G$ that is minimal subject to
$\chi(H)=k$.    Thus  $\delta(G)\ge k-1$. 
 By assumption, $G$ has a bisimplicial vertex $v$. There exist disjoint cliques $A, B\subseteq V(G)$ such that $N(v)=A\cup B$. Recall that $s\le t$.  We may assume that $|A|\le |B|$. Then 
 \[
  2|A|\le |A|+|B|=d(v)\ge\delta(G)\ge k-1=s+t-2\ge 2s-2.
\]
  It follows that $|A|\ge s-1$. Since $G$ is $C_4$-free, by applying  
Lemma~\ref{l:Tihany} to $G$, $v$, and $r=s-1$, there exists a clique $K\subseteq A\cup\{v\}$ with $|K|=s$  such that $K$ is Tihany. Hence  
$\chi(G\less K)\ge \chi(G)-|K|+1=t$. Let $S:=K$ and $T:=V(G)\less K$. 
   Then $\chi(G[S])=s$ and $\chi(G[T])\ge t$ and so $G$ is   $(s,t)$-splittable.
\end{proof}


\begin{thebibliography}{BKPS09}

\bibitem[BJ69]{BJ69}
W.~G. Brown and H.~A. Jung.
\newblock On odd circuits in chromatic graphs.
\newblock {\em Acta Math. Acad. Sci. Hungar.}, 20:129--134, 1969.

\bibitem[BKPS09]{BKPS09}
J\'{o}zsef Balogh, Alexandr~V. Kostochka, Noah Prince, and Michael Stiebitz.
\newblock The Erd\H{o}s-{L}ov\'{a}sz {T}ihany conjecture for quasi-line graphs.
\newblock {\em Discrete Math.}, 309(12):3985--3991, 2009.

\bibitem[CFP]{CFP13}
Maria Chudnovsky, Alexandra Fradkin, and Matthieu Plumettaz.
\newblock On the Erd\H{o}s-{L}ov\'asz {T}ihany {C}onjecture for claw-free
  graphs.
\newblock arXiv:1309.1020.

\bibitem[CS12]{ChSe2012}
Maria Chudnovsky and Paul Seymour.
\newblock Claw-free graphs. {VII}. {Q}uasi-line graphs.
\newblock {\em J. Combin. Theory Ser. B}, 102(6):1267--1294, 2012.

\bibitem[CS23]{CS23}
Maria Chudnovsky and Paul Seymour.
\newblock Even-hole-free graphs still have bisimplicial vertices.
\newblock {\em J. Combin. Theory Ser. B}, 161:331--381, 2023.

\bibitem[Erd68]{Erdos68}
P.~Erd\H{o}s.
\newblock Problems.
\newblock In {\em Theory of {G}raphs ({P}roc. {C}olloq., {T}ihany, 1966)},
  pages 361--362. Academic Press, New York, 1968.

\bibitem[KS08]{KS08}
Alexandr~V. Kostochka and Michael Stiebitz.
\newblock Partitions and edge colourings of multigraphs.
\newblock {\em Electron. J. Combin.}, 15(1):Note 25, 4, 2008.

\bibitem[LT26]{LT26}
Sean Longbrake and Juvaria Tariq.
\newblock Some cases of the Erd\H{o}s-Lovász Tihany Conjecture for claw-free
  graphs.
\newblock {\em Discrete Mathematics}, 349(3):114770, 2026.

\bibitem[Moz87]{Moz87}
N.~N. Mozhan.
\newblock Twice critical graphs with chromatic number five.
\newblock {\em Metody Diskret. Analiz.}, (46):50--59, 73, 1987.

\bibitem[Song19]{Song19}
Zi-Xia Song.
\newblock The Erd\H{o}s-{L}ov\'{a}sz {T}ihany Conjecture for graphs with forbidden
  holes.
\newblock {\em Discrete Math.}, 342(9):2632--2635, 2019.

\bibitem[Song22a]{Song22}
 Zi-Xia Song.
\newblock Some remarks on even-hole-free graphs.
\newblock {\em Electron. J. Combin.}, 29(3):Paper No. 3.30, 9, 2022.

\bibitem[Song22b]{SongSurvey}
 Zi-Xia Song.
\newblock A survey on the {E}rd\H{o}s-{L}ov\'asz {T}ihany Conjecture.
\newblock {\em Adv. Math. (China)}, 51(2):259--274, 2022.

\bibitem[Sti87]{Sti87a}
Michael Stiebitz.
\newblock {$K_5$} is the only double-critical {$5$}-chromatic graph.
\newblock {\em Discrete Math.}, 64(1):91--93, 1987.

\bibitem[Sti88]{Sti88}
Michael Stiebitz.
\newblock On {$k$}-critical {$n$}-chromatic graphs.
\newblock In {\em Combinatorics ({E}ger, 1987)}, volume~52 of {\em Colloq.
  Math. Soc. J\'{a}nos Bolyai}, pages 509--514. North-Holland, Amsterdam, 1988.

\bibitem[Sti17]{Sti17}
Michael Stiebitz.
\newblock A relaxed version of the {E}rd\H{o}s-{L}ov\'{a}sz {T}ihany conjecture.
\newblock {\em J. Graph Theory}, 85(1):278--287, 2017.

\end{thebibliography}
\end{document}